\newtheorem{thm}{Theorem}[section]
\newtheorem{rem}[thm]{Remark}
\newtheorem{conj}[thm]{Conjecture}
\numberwithin{equation}{section}
\title{\large \bf Concavity of some entropies}
\author{\bf Ioan RA\c{S}A\\
\small Department of Mathematics, Technical University of Cluj-Napoca, Romania\\ \small Email address: Ioan.Rasa@math.utcluj.ro}
\date{}
\begin{document}

\maketitle
\begin{abstract}
It is well-known that the Shannon entropies of some parameterized probability distributions are concave functions with respect to the parameter. In this paper we consider a family of such distributions (including the binomial, Poisson, and negative binomial distributions) and investigate the concavity of the Shannon, R\'{e}nyi, and Tsallis entropies of them.
\end{abstract}

\bigskip \bigskip

\section{Introduction}

Let $c\in \mathbb{R}$, $I_c := \left [ 0, -\frac{1}{c}\right ]$ if $c<0$, and $I_c:= [0,+\infty )$ if $c \geq 0$.

For $\alpha \in \mathbb{R}$ and $k \in \mathbb{N}_0$ the binomial coefficients are defined as usual by
\begin{equation*}
{\alpha \choose k}:=\frac{\alpha (\alpha -1)\dots (\alpha-k+1)}{k!}\quad \text{if } k \in \mathbb{N}, \text{ and } {\alpha \choose 0}:=1.
\end{equation*}

Let $n> 0$ be a real number such that $n>c$ if $c\geq 0$, or $n=-cl$ with some $l\in \mathbb{N}$ if $c<0$.

For $k\in \mathbb{N}_0$ and $x\in I_c$ define
\begin{equation*}
p_{n,k}^{[c]}(x):=(-1)^k {-\frac{n}{c} \choose k}(cx)^k (1+cx)^{-\frac{n}{c}-k}, \quad \text{if } c\neq 0,
\end{equation*}
\begin{equation*}
p_{n,k}^{[0]}(x):=\lim _{c\to 0} p_{n,k}^{[c]}(x)= \frac{(nx)^k}{k!}e^{-nx}.
\end{equation*}

Details and historical notes concerning these functions can be found in~\cite{3}, \cite{7}, \cite{21} and the references therein. In particular,
\begin{equation}
\frac{d}{dx}p_{n,k}^{[c]}(x) = n \left ( p_{n+c,k-1}^{[c]}(x) - p_{n+c,k}^{[c]}(x)\right ).\label{eq:1.1}
\end{equation}

Moreover, $\sum _{k=0}^\infty p_{n,k}^{[c]}(x) = 1$, so that $\left (p_{n,k}^{[c]}(x)\right )_{k\geq 0}$ is a parameterized probability distribution. Its associated Shannon entropy is
\begin{equation*}
H_{n,c}(x):=-\sum_{k=0}^\infty p_{n,k}^{[c]}(x) \log p_{n,k}^{[c]}(x),
\end{equation*}
while the R\'{e}nyi entropy of order $2$ and the Tsallis entropy of order $2$ are given, respectively, by (see~\cite{18}, \cite{20})
\begin{equation*}
R_{n,c}(x):= -\log S_{n,c}(x); \quad T_{n,c}(x):=1-S_{n,c}(x),
\end{equation*}
where
\begin{equation*}
S_{n,c}(x) := \sum _{k=0}^\infty \left (p_{n,k}^{[c]}(x)\right )^2, \quad x\in I_c.
\end{equation*}

The cases $c=-1$, $c=0$, $c=1$ correspond, respectively, to the binomial, Poisson, and negative binomial distributions.

\section{Shannon entropy}

$H_{n,-1}$ is a concave function; this is a special case of the results of~\cite{19}; see also~\cite{6}, \cite{8}, \cite{9} and the references therein. $H_{n,0}$ is also concave; moreover, $H'_{n,0}$ is completely monotonic (see, e.g., \cite[p. 2305]{2}).

For the sake of completeness we present here the proof for the concavity of $H_{n,c}$, $c\in \mathbb{R}$. Let us consider separately the cases $c\geq 0$ and $c<0$.

\begin{thm}\label{th:2.1}
For $c\geq 0$, $H_{n,c}$ is concave and increasing on $[0,+\infty )$.
\end{thm}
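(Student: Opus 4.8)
The plan is to differentiate $H_{n,c}$ twice, convert every derivative of $p_{n,k}^{[c]}$ into shifted distributions via \eqref{eq:1.1}, and then collapse the resulting series to closed form using two elementary identities that follow directly from the definition of $p_{n,k}^{[c]}$:
\[
\frac{p_{n,k+1}^{[c]}(x)}{p_{n,k}^{[c]}(x)} = \frac{(n+ck)x}{(k+1)(1+cx)}, \qquad p_{n+c,k}^{[c]}(x) = \frac{n+ck}{n(1+cx)}\,p_{n,k}^{[c]}(x).
\]
Writing $p_k := p_{n,k}^{[c]}(x)$ and differentiating $H_{n,c}=-\sum_k p_k\log p_k$, the relation $\sum_k p_k' = 0$ (from $\sum_k p_k=1$) kills the $+1$ term and gives $H_{n,c}'=-\sum_k p_k'\log p_k$. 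Substituting $p_k'=n\bigl(p_{n+c,k-1}^{[c]}-p_{n+c,k}^{[c]}\bigr)$ and summing by parts, I obtain
\[
H_{n,c}'(x) = -n\sum_{k=0}^\infty p_{n+c,k}^{[c]}(x)\,r_k(x), \qquad r_k(x):=\log\frac{p_{k+1}}{p_k},
\]
the boundary terms vanishing by the decay of the $p_k$.

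For \emph{monotonicity} I would bound $r_k\le p_{k+1}/p_k-1$ via $\log t\le t-1$; since $p_{n+c,k}^{[c]}\ge 0$ and $p_{n+c,k}^{[c]}/p_k=\frac{n+ck}{n(1+cx)}$, this yields $\sum_k p_{n+c,k}^{[c]}r_k \le \frac{1}{n(1+cx)}\sum_k (n+ck)(p_{k+1}-p_k)$. The last series is evaluated by an index shift together with $\sum_k p_k=1$ and the mean $\sum_k k\,p_k=nx$; it collapses to $-c-(n-c)p_0\le 0$ (this is where $n>c\ge 0$ enters). Hence $H_{n,c}'\ge 0$ and $H_{n,c}$ is increasing.

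For \emph{concavity} I differentiate once more. The crucial simplification is that $r_k'(x)=\tfrac1x-\tfrac{c}{1+cx}=\tfrac{1}{x(1+cx)}$ is \emph{independent of} $k$, so the term $-n\sum_k p_{n+c,k}^{[c]}r_k'$ contributes exactly $-\tfrac{n}{x(1+cx)}$. Applying \eqref{eq:1.1} to $p_{n+c,k}^{[c]}$ and summing by parts again gives
\[
H_{n,c}''(x) = -n(n+c)\sum_{k=0}^\infty p_{n+2c,k}^{[c]}(x)\,(r_{k+1}-r_k) - \frac{n}{x(1+cx)}.
\]
From the ratio identity, $r_{k+1}-r_k=\log\frac{(n+ck+c)(k+1)}{(n+ck)(k+2)}\le 0$, i.e. the sequence $(p_k)$ is log-concave, so the first term is \emph{nonnegative}. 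Concavity is therefore not automatic: the real content is to show that this nonnegative term is dominated by $\tfrac{n}{x(1+cx)}$.

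This comparison is the main obstacle. I would again use $\log t\le t-1$, now as $|r_{k+1}-r_k|\le \frac{n-c}{(n+ck+c)(k+1)}$, and then cancel the factor $n+ck+c$ by the shift identity $p_{n+2c,k}^{[c]}=\frac{n+c+ck}{(n+c)(1+cx)}\,p_{n+c,k}^{[c]}$, which reduces everything to the single series $\sum_k \frac{p_{n+c,k}^{[c]}}{k+1}$. Evaluating this in closed form is the step I expect to require the most care; from the generating function of $p_{n+c,k}^{[c]}$ (or by integrating $\sum_k p_{n+c,k}^{[c]}t^k$) one finds $\sum_k \frac{p_{n+c,k}^{[c]}(x)}{k+1}=\frac{1-(1+cx)^{-n/c}}{nx}$. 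Substituting back, the required bound $n(n+c)\sum_k p_{n+2c,k}^{[c]}|r_{k+1}-r_k|\le \frac{n}{x(1+cx)}$ collapses to
\[
(n-c)\bigl(1-(1+cx)^{-n/c}\bigr)\le n,
\]
which is immediate since $0\le n-c\le n$ and $0\le 1-(1+cx)^{-n/c}\le 1$. Thus $H_{n,c}''\le 0$. The case $c=0$ follows by letting $c\to 0$ (with $(1+cx)^{-n/c}\to e^{-nx}$), or by repeating the computation directly for the Poisson weights.
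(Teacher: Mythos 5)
Your proposal is correct and follows essentially the same route as the paper: differentiate twice via \eqref{eq:1.1}, isolate the $-\tfrac{n}{x(1+cx)}$ term, bound the remaining log-series with $\log t\le t-1$, and evaluate it in closed form to arrive at the same expression $-\tfrac{n}{x(1+cx)}\bigl(\tfrac{c}{n}+(1-\tfrac{c}{n})(1+cx)^{-n/c}\bigr)$. The only (minor) divergence is that you prove monotonicity by bounding $H_{n,c}'$ directly, whereas the paper deduces it from concavity together with the positivity of $H_{n,c}$; both arguments are valid.
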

\begin{proof}
Using~\eqref{eq:1.1}, it is a matter of calculus to prove that
\begin{equation*}
H'_{n,c}(x) = n \sum _{k=0} ^\infty p_{n+c,k}^{[c]}(x)\log \frac{p_{n,k}^{[c]}(x)}{p_{n,k+1}^{[c]}(x)},
\end{equation*}
which leads to
\begin{equation}
H'_{n,c}(x) = n \left (\log \frac{1+cx}{x} + \sum _{k=0}^\infty p_{n+c,k}^{[c]}(x) \log \frac{k+1}{n+ck} \right ),\label{eq:2.1}
\end{equation}
and therefore to
\begin{equation*}
H_{n,c}''(x) = -\frac{n}{x(1+cx)} + n (n+c)\sum_{k=0}^\infty p_{n+2c,k}^{[c]}(x)\log \frac{(k+2)(n+ck)}{(k+1)(n+ck+c)}.
\end{equation*}

It follows that
\begin{equation}
H''_{n,c}(x)>-\frac{n}{x(1+cx)},\quad x>0. \label{eq:2.2}
\end{equation}

Since $\log t<t-1$, $t>1$, we have also
\begin{eqnarray*}
H''_{n,c}(x) &<& -\frac{n}{x(1+cx)} + n (n+c)\sum_{k=0}^\infty p_{n+2c,k}^{[c]}(x) \frac{n-c}{(k+1)(n+ck+c)} \\ &=& -\frac{n}{x(1+cx)} \left ( \frac{c}{n} + \left ( 1-\frac{c}{n} \right ) (1+cx)^{-\frac{n}{c}} \right ) <0,
\end{eqnarray*}
so that $H_{n,c}$ is concave on $[0,+\infty )$; being positive, it is also increasing on $[0,+\infty)$.
\end{proof}

\begin{rem}
From~\eqref{eq:2.2} it follows that the functions $H_{n,0}(x)+n x \log x$ and $H_{n,c}(x) + \frac{n}{c}\left ( c x \log x - (1+cx)\log (1+cx) \right )$, $c>0$, are convex on $[0,+\infty)$.
\end{rem}

\begin{rem}
The following inequalities are valid for $x>0$ and $c\geq 0$:
\begin{equation}
\log \frac{x}{1+cx} \leq \sum _{k=0}^\infty p_{n+c,k}^{[c]}(x)\log \frac{k+1}{ck+n} \leq \log \frac{x+\frac{1}{n}}{1+cx}. \label{eq:2.3}
\end{equation}
\end{rem}

The first one follows from $H_{n,0}'(x)>0$, taking into account~\eqref{eq:2.1}, and the second is a consequence of Jensen's inequality applied to the concave function $\log x$. In particular, for $c=0$ and $n=1$ we get:
\begin{equation*}
\log x \leq \sum _{k=0}^\infty e^{-x}\frac{x^k}{k!}\log (k+1) \leq \log (x+1), \quad x>0.
\end{equation*}

The case $c<0$ can be studied with the same method as in Theorem~\ref{th:2.1}, but we present here a different approach, based on an integral representation from~\cite{10}.

\begin{thm}\label{th:2.4}
For $c<0$, $H_{n,c}$ is concave on $\left [ 0, -\frac{1}{c}\right ]$, increasing on $\left [ 0, -\frac{1}{2c}\right ]$, and decreasing on $\left [ -\frac{1}{2c}, -\frac{1}{c}\right ]$.
\end{thm}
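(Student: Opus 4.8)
The plan is to split the statement into the symmetry-driven part and the genuinely analytic part. First I would record an exact symmetry of the distribution. Writing $l:=-\frac{n}{c}\in\mathbb{N}$, we have $p_{n,k}^{[c]}(x)=(-1)^k\binom{l}{k}(cx)^k(1+cx)^{l-k}$, a sum that is in fact finite ($0\le k\le l$, since $\binom{l}{k}=0$ for $k>l$). Substituting $x\mapsto -\frac1c-x$ gives $cx\mapsto -1-cx$ and $1+cx\mapsto -cx$, and together with $k\mapsto l-k$ a short computation yields $p_{n,k}^{[c]}(x)=p_{n,l-k}^{[c]}\!\left(-\tfrac1c-x\right)$. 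Hence $H_{n,c}\!\left(-\tfrac1c-x\right)=H_{n,c}(x)$, so $H_{n,c}$ is symmetric about the midpoint $-\frac1{2c}$ of $I_c$. Once concavity is known, this symmetry forces $H'_{n,c}\!\left(-\tfrac1{2c}\right)=0$; as $H'_{n,c}$ is then nonincreasing, it is $\ge 0$ on $\left[0,-\tfrac1{2c}\right]$ and $\le 0$ on $\left[-\tfrac1{2c},-\tfrac1c\right]$, which is exactly the claimed monotonicity. Everything therefore reduces to proving concavity on $\left[0,-\tfrac1c\right]$.

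For concavity I would first check how far the elementary method of Theorem~\ref{th:2.1} reaches. The identities derived there are formal consequences of \eqref{eq:1.1} and survive for $c<0$: one still obtains the lower bound \eqref{eq:2.2}, and $\log t<t-1$ still produces the closed-form majorant $-\frac{n}{x(1+cx)}\left(\frac{c}{n}+\left(1-\frac{c}{n}\right)(1+cx)^{-n/c}\right)$. The obstruction is that for $c<0$ the bracket $\frac{c}{n}+\left(1-\frac{c}{n}\right)(1+cx)^{l}$ equals $1$ at $x=0$ but equals $-\frac1l<0$ at $x=-\frac1c$, so it is positive only near $x=0$ and (by the symmetry above) near $x=-\frac1c$, and becomes negative in the central part of $I_c$. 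Thus the bound $H''_{n,c}<0$ is established only on neighbourhoods of the two endpoints, with a genuine gap around the axis of symmetry; this is precisely why a new idea is needed here, in contrast to the case $c\ge 0$.

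To close the central gap I would drop the inequality $\log t<t-1$ and represent the second derivative exactly. Starting from $H''_{n,c}=-\sum_k \frac{\left(\left(p_{n,k}^{[c]}\right)'\right)^2}{p_{n,k}^{[c]}}-\sum_k \left(p_{n,k}^{[c]}\right)''\log p_{n,k}^{[c]}$, the first sum is $-1$ times the Fisher information and is already $\le 0$, so the entire difficulty is the sign of the second sum. This is where the integral representation of~\cite{10} enters: I would use it to rewrite $H''_{n,c}(x)$ as $-\int$ of a kernel of constant sign over $I_c$, an expression that is manifestly nonpositive on the \emph{whole} interval rather than only near its ends. The main obstacle is exactly this uniform control of $H''$ in the middle region where the endpoint estimate fails; once the integral representation supplies a single sign there, concavity on all of $\left[0,-\tfrac1c\right]$ follows, and then the two monotonicity assertions follow from the symmetry recorded in the first paragraph.
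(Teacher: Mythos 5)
Your symmetry reduction is exactly the one the paper uses (the paper asserts $H_{n,c}\left(-\frac{1}{2c}-t\right)=H_{n,c}\left(-\frac{1}{2c}+t\right)$ without proof; your computation $p_{n,k}^{[c]}(x)=p_{n,l-k}^{[c]}\left(-\frac1c-x\right)$ supplies it), and your diagnosis of why the method of Theorem~\ref{th:2.1} degenerates in the middle of $I_c$ is a correct and genuinely useful observation that the paper does not make explicit. You also point at the right tool, the integral representation of~\cite{10}. The problem is the step where you actually have to use it.

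The representation does \emph{not} deliver $H_{n,c}''$ as ``$-\int$ of a kernel of constant sign,'' manifestly nonpositive. What it delivers (equation~\eqref{eq:2.4} in the paper) is
\begin{equation*}
H_{n,c}''(x) = -\frac{n}{x(1+cx)}+c^2l(l-1)\int _0^1 \frac{s-1}{\log s}\left [ (1+cx-cxs)^{l-2} +\left ( \left ( 1+cx\right)s-cx \right) ^{l-2}\right ]ds,
\end{equation*}
i.e.\ a negative singular term \emph{plus} a strictly positive integral term (the kernel $\frac{s-1}{\log s}$ and the bracket are both positive on $(0,1)$). So the sign of $H''$ is decided by a competition between the two terms, and the heart of the proof is the quantitative estimate that resolves it: one bounds $0<\frac{s-1}{\log s}<1$, evaluates the remaining elementary integral of the bracket in closed form, and finds that it combines with $-\frac{n}{x(1+cx)}$ to leave exactly $-n\,\frac{(1+cx)^{l}+(-cx)^{l}}{x(1+cx)}<0$. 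That cancellation is not visible from the shape of the representation alone; it has to be computed. As written, your proposal replaces precisely this step --- the one that closes the central gap you correctly identified --- with the hope that the representation is already of one sign, which it is not. Your decomposition $H''=-(\text{Fisher information})-\sum_k (p_{n,k}^{[c]})''\log p_{n,k}^{[c]}$ does not rescue this either, since the second sum has no evident sign and you do not connect it to the integral formula.
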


\begin{proof}
For $c<0$ we have $n=-cl$ with $l\in \mathbb{N}$. Using~\cite[(2.5)]{10} we get
\begin{eqnarray*}
&&H_{n,c}(x) = H_{l,-1}(-cx) = -l\left [ (-cx)\log (-cx) + (1+cx)\log (1+cx) \right ] +\\&& \int _0^1 \frac{s-1}{\log s} \frac{(1+cx-cxs)^l + \left ( (1+cx)s-cx\right )^l-1-s^l}{(s-1)^2}ds.
\end{eqnarray*}

It is  matter of calculus to prove that
\begin{equation}\label{eq:2.4}
H_{n,c}''(x) = -\frac{n}{x(1+cx)}+c^2l(l-1)\int _0^1 \frac{s-1}{\log s}\left [ (1+cx-cxs)^{l-2} +\left ( \left ( 1+cx\right)s-cx \right) ^{l-2}\right ]ds.
\end{equation}

For $0<s<1$ we have $0<\frac{s-1}{\log s}<1$; moreover
\begin{equation*}
\int _0^1 \left [ (1+cx-cxs)^{l-2} + \left (\left ( 1+cx\right)s-cx  \right ) ^{l-2}  \right ]ds = \frac{1}{l-1}\left [ \frac{1-(-cx)^{l-1}}{1+cx} - \frac{1-(1+cx)^{l-1}}{cx}\right ].
\end{equation*}

Summing up, we get
\begin{equation*}
H_{n,c}''(x)<-n\frac{(1+cx)^{-\frac{n}{c}}+(-cx)^{-\frac{n}{c}}}{x(1+cx)}<0,\quad 0<x<-\frac{1}{c}.
\end{equation*}

Consequently, $H_{n,c}$ is concave on $\left [ 0,-\frac{1}{c} \right ]$. Since
\begin{equation*}
H_{n,c}\left ( -\frac{1}{2c} - t\right )  = H_{n,c} \left ( -\frac{1}{2c}+t\right ), \quad t \in \left [ 0,-\frac{1}{2c} \right ],
\end{equation*}
we conclude that $H_{n,c}$ is increasing on $\left [ 0,-\frac{1}{2c} \right ]$ and decreasing on $\left [ -\frac{1}{2c}, -\frac{1}{c} \right ]$.
\end{proof}

\begin{rem}
Let $c<0$ and $l\geq 2$. From~\eqref{eq:2.4} it follows that $H_{n,c}''(x)>-\frac{n}{x(1+cx)}$, and so the function
\begin{equation*}
H_{n,c}(x) + \frac{n}{c}\left [ cx \log x - (1+cx)\log (1+cx) \right ]
\end{equation*}
is convex on $\left [0, -\frac{1}{c} \right ]$.
\end{rem}

\begin{rem}
For $c=-1$, the method used to prove~\eqref{eq:2.3} leads to
\begin{eqnarray*}
\log \frac{x}{1-x} &<& \sum _{k=0}^n {n \choose k} x^k (1-x)^{n-k}\log \frac{k+1}{n+1-k}\\ &<& \log \left (\frac{x}{1-x} + \frac{1-(n+2)x^{n+1}}{(n+1)(1-x)} \right ), \quad 0<x<\frac{1}{2}.
\end{eqnarray*}
\end{rem}

\section{$S_{n,c}$ and Heun functions}

The following conjecture was formulated in~\cite{13}:
\begin{conj}\label{conj:3.1}
$S_{n,-1}$ is convex on $[0,1]$.
\end{conj}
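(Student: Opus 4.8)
The plan is to reduce the conjecture to the pointwise inequality $S_{n,-1}''(x)\ge 0$ on $[0,1]$ and then to exploit the very rigid structure of $S_{n,-1}$. Writing $b_{n,k}(x)=p_{n,k}^{[-1]}(x)=\binom{n}{k}x^k(1-x)^{n-k}$ for the Bernstein basis, we have $S_{n,-1}(x)=\sum_{k=0}^n b_{n,k}(x)^2$. The symmetry $b_{n,k}(1-x)=b_{n,n-k}(x)$ gives $S_{n,-1}(x)=S_{n,-1}(1-x)$, so it suffices to work on $\left[0,\tfrac12\right]$; there $S_{n,-1}$ runs from $S_{n,-1}(0)=1$ down to the global minimum $S_{n,-1}\!\left(\tfrac12\right)=\binom{2n}{n}4^{-n}$. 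A short expansion at the left endpoint gives $S_{n,-1}''(0)=2n(3n-1)>0$, and $x=\tfrac12$ is an interior minimum, so the real difficulty is confined to the interior of $\left(0,\tfrac12\right)$.

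First I would record two closed forms that expose the analytic structure. Summing a classical generating identity for $\binom{n}{k}^2$ yields the Legendre representation
\[
S_{n,-1}(x)=(1-2x)^n\,P_n\!\left(\frac{1-2x+2x^2}{1-2x}\right),
\]
equivalently $S_{n,-1}(x)=(1-x)^{2n}\,{}_2F_1\!\left(-n,-n;1;x^2/(1-x)^2\right)$. Substituting this into Legendre's equation (resp. the hypergeometric equation) produces a second-order linear ODE for $S_{n,-1}$ with polynomial coefficients and regular singular points at $0,\tfrac12,1$ and $\infty$ — a Heun equation, which is exactly the object this section is built around. As an independent and more elementary tool, the Legendre three-term recurrence transforms into
\[
(n+1)\,S_{n+1,-1}(x)=(2n+1)(1-2x+2x^2)\,S_{n,-1}(x)-n\,(1-2x)^2\,S_{n-1,-1}(x),
\]
which I would use as the backbone of an induction on $n$, the base cases being the explicit convex polynomials $S_{1,-1}(x)=1-2x+2x^2$ and $S_{2,-1}(x)=1-4x+10x^2-12x^3+6x^4$.

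From here there are two natural routes to the sign of $S_{n,-1}''$. One is to use the Heun ODE to express $S_{n,-1}''$ through $S_{n,-1}'$ and $S_{n,-1}$ with explicit rational coefficients, reducing convexity to sign bounds on the lower-order terms; this in turn calls for auxiliary estimates, such as the monotonicity of $S_{n,-1}$ on $\left[0,\tfrac12\right]$, which can be attacked through the derivative identity~\eqref{eq:1.1}. The other route is to differentiate the three-term recurrence twice and to bound the resulting mixed terms, feeding in the inductive hypotheses $S_{n,-1}''\ge0$ and $S_{n-1,-1}''\ge0$; here the relations $1-2x+2x^2=\tfrac12\bigl(1+(1-2x)^2\bigr)$ and $(1-2x+2x^2)-(1-2x)^2=2x(1-x)\ge0$ help organize the coefficients and keep the signs under control.

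I expect the genuine obstacle to be the positivity itself, not the setup. The naive Bernstein-coefficient test already fails: for $n=2$ the degree-$2$ Bernstein expansion of $S_{n,-1}''$ has a negative middle coefficient even though $S_{n,-1}''>0$, so convexity cannot be read off from nonnegative Bernstein data. Moreover neither the Heun ODE nor the recurrence delivers the sign directly, precisely because of the negatively signed term $-n(1-2x)^2S_{n-1,-1}$ (equivalently the first-order and zeroth-order terms of the Heun equation). Closing this gap — controlling that subtracted contribution uniformly in $x$ and $n$ — is where the real work lies, which is consistent with the statement being recorded here only as a conjecture.
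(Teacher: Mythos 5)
Your proposal is a plan, not a proof: you yourself concede in the final paragraph that the decisive step --- the positivity of $S_{n,-1}''$ --- is left open, and each of your two routes stalls exactly where you say it does. The twice-differentiated three-term recurrence
$(n+1)S_{n+1,-1}'' = (2n+1)(1-2x+2x^2)S_{n,-1}'' - n(1-2x)^2S_{n-1,-1}'' + \cdots$
carries the negatively signed term $-n(1-2x)^2S_{n-1,-1}''$, so the inductive hypothesis $S_{n-1,-1}''\ge 0$ works \emph{against} you and the induction does not close without a quantitative lower bound on the other terms; likewise the Heun ODE only exchanges the sign of $S_{n,-1}''$ for sign information about $S_{n,-1}'$ that is equally unproved on $\left(0,\tfrac12\right)$. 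Your preparatory facts (the symmetry, the value $S_{n,-1}''(0)=2n(3n-1)$, the Legendre representation and the recurrence it induces) are all correct, but none of them touches the interior of $\left(0,\tfrac12\right)$, which is where the conjecture lives.

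The missing idea is the one the paper records as~\eqref{eq:5.4} (due to Gavrea and Ivan~\cite{4}): $S_{n,-1}$ is a polynomial in $\left(x-\tfrac12\right)^2$ with \emph{explicit positive} coefficients,
\begin{equation*}
S_{n,-1}(x)=\sum_{k=0}^n 4^{k-n}\binom{2n}{n}\binom{n}{k}^2\binom{2n}{2k}^{-1}\left(x-\tfrac12\right)^{2k},
\end{equation*}
from which convexity is immediate: differentiating twice term by term leaves only nonnegative even powers of $x-\tfrac12$ with positive coefficients. Your symmetry observation only gives that the odd Taylor coefficients at $\tfrac12$ vanish; the content is that the even ones are positive, and that requires an identity of this type (or, along your Legendre route, the new inequalities for Legendre/ultraspherical polynomials that Nikolov~\cite{12} had to prove --- which is precisely the work your proposal defers). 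I would suggest either deriving~\eqref{eq:5.4} from your ${}_2F_1$ closed form by expanding around $x=\tfrac12$, or citing~\cite{4} or~\cite{12} for the positivity step; as written, the argument has a genuine gap at its center.
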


Th. Neuschel~\cite{11} proved that $S_{n,-1}$ is decreasing on $\left [ 0, \frac{1}{2}\right ]$ and increasing on $\left [ \frac{1}{2}, 1\right ]$. The conjecture and the result of Neuschel can be found also in~\cite{5}.

A proof of the conjecture was given by G. Nikolov~\cite{12}, who related it with some new inequalities involving Legendre polynomials. Another proof can be found in~\cite{4}.

Using the important results of Elena Berdysheva~\cite{3}, the following extension was obtained in~\cite{17}:
\begin{thm}\label{th:3.2}
(\cite[Theorem 9]{17}). For $c<0$, $S_{n,c}$ is convex on $\left [ 0, -\frac{1}{c}\right ]$.
\end{thm}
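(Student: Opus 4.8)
The plan is to strip the parameter $c$ off the problem by an affine change of variable, reducing the statement to the binomial case already recorded as Conjecture~\ref{conj:3.1}. Since $c<0$ we have $n=-cl$ with $l\in\mathbb{N}$, hence $-\tfrac{n}{c}=l$, so the only nonzero terms of the defining series are those with $0\le k\le l$. Setting $u:=-cx$, so that $cx=-u$ and $1+cx=1-u$, a direct computation gives
\begin{equation*}
p_{n,k}^{[c]}(x)=(-1)^k{l\choose k}(-u)^k(1-u)^{l-k}={l\choose k}u^k(1-u)^{l-k},\qquad 0\le k\le l.
\end{equation*}
Thus the distribution $\left(p_{n,k}^{[c]}(x)\right)_{k\ge 0}$ is exactly the binomial distribution with $l$ trials and success probability $u=-cx$, and consequently
\begin{equation*}
S_{n,c}(x)=\sum_{k=0}^{l}{l\choose k}^2 u^{2k}(1-u)^{2(l-k)}=S_{l,-1}(u),\qquad u=-cx.
\end{equation*}

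Next I would exploit that $x\mapsto u=-cx$ is affine and carries $\left[0,-\tfrac{1}{c}\right]$ onto $[0,1]$. Convexity is invariant under affine reparameterization: differentiating twice gives $S_{n,c}''(x)=c^2\,S_{l,-1}''(u)$, so $S_{n,c}$ is convex on $\left[0,-\tfrac{1}{c}\right]$ if and only if $S_{l,-1}$ is convex on $[0,1]$. The latter is precisely Conjecture~\ref{conj:3.1} with binomial index $l$, now a theorem through the work of G. Nikolov~\cite{12} (see also~\cite{4}). This finishes the argument and, incidentally, shows that for $c<0$ the statement is \emph{equivalent} to, rather than strictly stronger than, the binomial case.

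The main obstacle is therefore entirely inherited: the hard analytic fact is the convexity of $S_{l,-1}$ itself, and Nikolov's proof rests on delicate inequalities for Legendre polynomials. If instead one wants a direct proof valid for all $c<0$ at once --- the route taken in~\cite{17} and the one in the spirit of the present section --- the natural starting point is to differentiate $S_{n,c}$ twice using~\eqref{eq:1.1}, which expresses $S_{n,c}'$ and $S_{n,c}''$ as series of products of two functions $p^{[c]}$ with shifted parameters, and then to substitute the integral representation of these functions furnished by Berdysheva~\cite{3}. The difficulty then migrates to showing that the resulting integral keeps a fixed (nonnegative) sign on the whole interval; this is exactly where the Heun-type second-order structure of $S_{n,c}$ exploited in the remainder of the section becomes indispensable, and it is the step I would expect to be the most technical.
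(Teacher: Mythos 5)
Your reduction is correct, and it is a genuinely different (and more elementary) route than the one the paper points to: the paper offers no proof of Theorem~\ref{th:3.2} at all, but cites \cite[Theorem 9]{17}, where the result is obtained via Berdysheva's integral representations \cite{3}. Your argument instead observes that under the paper's standing convention for $c<0$ (namely $n=-cl$, $l\in\mathbb{N}$, so $-\tfrac{n}{c}=l$) one has $p_{n,k}^{[c]}(x)=\binom{l}{k}u^k(1-u)^{l-k}$ with $u=-cx$, hence $S_{n,c}(x)=S_{l,-1}(-cx)$, and convexity transports through the affine map since $S_{n,c}''(x)=c^2S_{l,-1}''(-cx)$. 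This is exactly the device the paper itself uses for the Shannon entropy in the proof of Theorem~\ref{th:2.4}, where $H_{n,c}(x)=H_{l,-1}(-cx)$, so it is fully in the spirit of the text; combined with the resolved Conjecture~\ref{conj:3.1} (Nikolov \cite{12}, or even more directly the expansion~\eqref{eq:5.4} of \cite{4}, which exhibits $S_{l,-1}$ as a polynomial in $\left(x-\tfrac12\right)^2$ with positive coefficients and hence manifestly convex), it gives a complete and shorter proof. What your route buys is the observation that, within this paper's hypotheses, the $c<0$ statement is \emph{equivalent} to the binomial case rather than a strict generalization; what the route through \cite{3} and \cite{17} buys is independence from Nikolov's Legendre-polynomial inequalities and a framework that also covers the non-discrete (Durrmeyer-type) analogues treated there. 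Your closing speculation that the Heun-equation structure is ``indispensable'' to the direct approach is not accurate --- the Heun connection in Section~3 is a separate observation, not an ingredient of the convexity proof --- but since that paragraph is commentary rather than part of your argument, it does not affect the validity of the proof.
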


A stronger conjecture was formulated in~\cite{14} and~\cite{17}:
\begin{conj}\label{conj:3.3}
For $c\in \mathbb{R}$, $S_{n,c}$ is logarithmically convex, i.e., $\log S_{n,c}$ is convex.
\end{conj}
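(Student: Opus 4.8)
The statement to be established is that $D_{n,c}(x) := S_{n,c}(x) S_{n,c}''(x) - \left(S_{n,c}'(x)\right)^2 \ge 0$ on $I_c$, which is equivalent to $\left(\log S_{n,c}\right)'' \ge 0$. The first thing I would check is whether the summands themselves are log-convex, since a sum of log-convex functions is log-convex; this would settle everything at once. Unfortunately it fails: writing $\log p_{n,k}^{[c]}(x) = \mathrm{const} + k\log x - \left(\tfrac{n}{c}+k\right)\log(1+cx)$ for $c\neq 0$ gives
\[
\left(\log p_{n,k}^{[c]}\right)''(x) = -\frac{k}{x^2} + \frac{c(n+ck)}{(1+cx)^2},
\]
which is negative near $x=0$ as soon as $k\ge 1$. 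Hence each individual squared term is log-convex only for $k=0$, and the conjecture must exploit the correlations among the terms: mass is transported between neighbouring indices $k$ as $x$ grows (this is exactly the content of~\eqref{eq:1.1}), and it is this transport that is expected to restore global log-convexity.

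The model case $c=0$ is instructive and, I believe, provable. Here $S_{n,0}(x)=e^{-2nx}\sum_{k\ge 0}\frac{(nx)^{2k}}{(k!)^2}=e^{-2nx}I_0(2nx)$, where $I_0$ is the modified Bessel function. Thus $\log S_{n,0}(x) = -2nx + \log I_0(2nx)$ and $\left(\log S_{n,0}\right)''(x) = (2n)^2 \left(\log I_0\right)''(2nx)$, so the claim reduces to the log-convexity of $I_0$. Writing $r(y)=I_0'(y)/I_0(y)=I_1(y)/I_0(y)$, one has $\left(\log I_0\right)''=r'$, and $r$ is the classical monotonically increasing ratio of modified Bessel functions; equivalently this is a Turán-type inequality for $I_0$. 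This $c=0$ reduction tells me what to look for in general: the section heading already signals that $S_{n,c}$ satisfies a second-order linear (Heun-type) differential equation, and the plan is to read log-convexity off that equation.

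Concretely, I would set $u=S_{n,c}'/S_{n,c}$, so that log-convexity is the assertion $u'\ge 0$. If $S_{n,c}$ obeys a normalised equation $S'' = A(x)S' + B(x)S$, then $u$ satisfies the Riccati equation $u' = -u^2 + A u + B$, and $u'\ge 0$ becomes the pointwise requirement that $u$ lie between the roots of $t^2-At-B$. Proving this amounts to establishing sharp two-sided bounds on the logarithmic derivative $S_{n,c}'/S_{n,c}$ --- the analogue of the Bessel-ratio monotonicity above, now for a Heun function. In parallel, for $c<0$ (where $n=-cl$ and the $p_{n,k}^{[c]}$ are polynomials, so $S_{n,c}$ is a polynomial) I would try to reuse the integral representation behind Theorem~\ref{th:3.2}: if Berdysheva's machinery yields $S_{n,c}(x)=\int K(x,t)\,d\mu(t)$ with a kernel that is log-convex in $x$ for each fixed $t$, then log-convexity follows because an integral of log-convex functions is log-convex (integrate the defining inequality and apply H\"older). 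For $c<0$ one could also attempt the polynomial route of Nikolov~\cite{12}, recasting $D_{n,c}\ge 0$ as an inequality among Legendre (or Jacobi) polynomials.

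The main obstacle is the general case. Unlike $I_0$, a Heun function possesses no simple three-term recurrence or closed product formula, so the Turán-type (equivalently Riccati) inequality for $S_{n,c}'/S_{n,c}$ cannot be obtained by the elementary Bessel manipulations that settle $c=0$. Getting uniform control of the logarithmic derivative across the whole parameter range --- both signs of $c$, all admissible $n$, and the degenerate endpoints of $I_c$ where individual terms blow up logarithmically --- is where the difficulty concentrates, and is presumably why the statement is offered here as a conjecture rather than a theorem.
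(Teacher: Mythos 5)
You have correctly diagnosed that this statement is a conjecture and not a theorem: the paper offers no proof of it, and your proposal does not (and could not honestly) supply one. What you should be aware of is that the case $c\geq 0$ is not open --- it is settled in the paper itself by Theorem~\ref{th:3.4}, quoting Abel, Gawronski and Neuschel~\cite{1}: $S_{n,c}$ is completely monotonic for $c\geq 0$, and complete monotonicity implies logarithmic convexity. Your Bessel computation for $c=0$ is correct ($S_{n,0}(x)=e^{-2nx}I_0(2nx)$, and log-convexity reduces to monotonicity of $I_1/I_0$, which holds), but it is subsumed by that result; likewise your Riccati reformulation of the problem via $u=S_{n,c}'/S_{n,c}$ is, up to sign, exactly the content of Theorem~\ref{th:4.1}(ii), where $R_{n,c}'=-u$ is shown to satisfy the Riccati equation derived from~\eqref{eq:3.1}. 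So for $c\geq 0$ your proposed program is unnecessary, and for $c<0$ it is not carried out.

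The genuinely open part, as the paper's remark after Theorem~\ref{th:4.1} states, is $c<0$, and there your proposal only lists plausible strategies without executing any of them: you would need either an integral representation $S_{n,c}(x)=\int K(x,t)\,d\mu(t)$ with a kernel log-convex in $x$ (which Berdysheva's formula behind Theorem~\ref{th:3.2} is not known to provide in that form --- it yields ordinary convexity, which is strictly weaker), or the two-sided bound on $S_{n,c}'/S_{n,c}$ forced by the Riccati equation, or a Nikolov-style~\cite{12} polynomial inequality. None of these is established, so the $c<0$ case remains exactly where the paper leaves it. In short: your assessment of the difficulty is accurate and your partial computations are correct, but you should cite Theorem~\ref{th:3.4} to dispose of $c\geq 0$ outright rather than re-deriving the $c=0$ subcase, and you should state plainly that for $c<0$ you have no proof.
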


It was validated for $c\geq 0$ by U. Abel, W. Gawronski and Th. Neuschel~\cite{1}, who proved a stronger result:
\begin{thm}\label{th:3.4}
(\cite{1}). For $c\geq 0$, the function $S_{n,c}$ is completely monotonic, i.e.,
\begin{equation*}
(-1)^m \left ( \frac{d}{dx} \right )^m S_{n,c}(x)>0, \quad x\geq 0, m\geq 0.
\end{equation*}

Consequently, for $c\geq 0$, $S_{n,c}$ is logarithmically convex, and hence convex.
\end{thm}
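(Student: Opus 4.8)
The plan is to realize $S_{n,c}$ as the Laplace transform of a nonnegative measure and invoke Bernstein's theorem, which characterizes the completely monotonic functions on $[0,\infty)$ as exactly such transforms. I treat $c>0$ in detail; the Poisson case $c=0$ is easier, since a direct summation gives $S_{n,0}(x)=e^{-2nx}I_0(2nx)=\frac{1}{\pi}\int_0^\pi e^{-2nx(1-\cos\phi)}\,d\phi$, a superposition of decaying exponentials and hence manifestly completely monotonic.

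First I would compute the probability generating function. From the definition of $p_{n,k}^{[c]}$ and the binomial series one obtains
\[
G(z):=\sum_{k=0}^\infty p_{n,k}^{[c]}(x)\,z^k=\bigl(1+cx(1-z)\bigr)^{-n/c}.
\]
Since the coefficients are real, Parseval's identity on the unit circle gives
\[
S_{n,c}(x)=\frac{1}{2\pi}\int_0^{2\pi}\bigl|G(e^{i\theta})\bigr|^2\,d\theta .
\]
The tempting move is to stop here: with $\zeta=1+cx(1-e^{i\theta})$ one has $|G(e^{i\theta})|^2=(\zeta\bar\zeta)^{-n/c}$, a negative power of a quadratic in $x$. \emph{However}, this integrand is \emph{not} completely monotonic in $x$ for small $\theta$ (a short computation shows its second derivative at $x=0$ is $\propto(n/c+1)(1-\cos\theta)-1$, which is negative near $\theta=0$), so integrating it termwise settles nothing. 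This is the crux of the difficulty, and it forces one to unfold the square before integrating in $\theta$.

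The device I would use is the Gamma integral $\zeta^{-s}=\frac{1}{\Gamma(s)}\int_0^\infty u^{s-1}e^{-\zeta u}\,du$, valid for $\operatorname{Re}\zeta>0$, applied to both $\zeta^{-n/c}$ and $\bar\zeta^{-n/c}$ with $s=n/c$. Here $\operatorname{Re}\zeta=1+cx(1-\cos\theta)\ge 1$ guarantees validity and absolute convergence, so Fubini lets me carry out the $\theta$-integration first; it collapses to a modified Bessel function via $\frac{1}{2\pi}\int_0^{2\pi}e^{cx(te^{i\theta}+\tau e^{-i\theta})}\,d\theta=I_0\bigl(2cx\sqrt{t\tau}\bigr)$, leaving
\[
S_{n,c}(x)=\frac{1}{\Gamma(n/c)^2}\int_0^\infty\!\!\int_0^\infty (t\tau)^{\,n/c-1}e^{-(t+\tau)}\;e^{-cx(t+\tau)}I_0\bigl(2cx\sqrt{t\tau}\bigr)\,dt\,d\tau .
\]

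It then remains to check that, for each fixed $t,\tau\ge 0$, the function $x\mapsto e^{-cx(t+\tau)}I_0(2cx\sqrt{t\tau})$ is completely monotonic. Using $I_0(y)=\frac{1}{\pi}\int_0^\pi e^{y\cos\phi}\,d\phi$ I would write it as $\frac{1}{\pi}\int_0^\pi \exp\!\bigl(-cx[(t+\tau)-2\sqrt{t\tau}\cos\phi]\bigr)\,d\phi$, and the elementary inequality $(t+\tau)-2\sqrt{t\tau}\cos\phi\ge(\sqrt t-\sqrt\tau)^2\ge 0$ exhibits it as a superposition of decaying exponentials, hence completely monotonic. The outer weight $(t\tau)^{n/c-1}e^{-(t+\tau)}$ is nonnegative and integrable (recall $n>c$, so $n/c>1$; a check at $x=0$ recovers $S_{n,c}(0)=1$), so $S_{n,c}$ is a nonnegative superposition of completely monotonic functions and is therefore completely monotonic. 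Finally, every completely monotonic function is logarithmically convex — by Cauchy--Schwarz applied to its Laplace representation one gets $S_{n,c}\bigl(\tfrac{x+y}{2}\bigr)^2\le S_{n,c}(x)S_{n,c}(y)$ — and a positive log-convex function is convex, which yields the stated consequences. The main obstacle is precisely the failure of complete monotonicity of the naive Parseval integrand; overcoming it by splitting the power through the Gamma integral, so that the $\theta$-average produces the Bessel kernel governed by the AM--GM inequality, is the decisive step.
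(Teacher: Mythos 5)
Your argument is correct, but there is nothing in the paper to compare it with: Theorem~\ref{th:3.4} is stated without proof and simply imported from Abel--Gawronski--Neuschel~\cite{1}, so what you have written is a self-contained proof of a result the paper only quotes. The steps check out. The generating function $G(z)=\bigl(1+cx(1-z)\bigr)^{-n/c}$ is right; Parseval converts $S_{n,c}(x)$ into $\frac{1}{2\pi}\int_0^{2\pi}\bigl(1+2cx(1+cx)(1-\cos\theta)\bigr)^{-n/c}d\theta$, and your computation showing that this integrand is \emph{not} completely monotonic in $x$ for $\theta$ near $0$ is accurate, so you are right that the termwise argument cannot be stopped there. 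The Gamma-integral unfolding is legitimate since $\operatorname{Re}\zeta=1+cx(1-\cos\theta)\geq 1$, the bound $\bigl|(t\tau)^{n/c-1}e^{-\zeta t-\bar\zeta\tau}\bigr|\leq (t\tau)^{n/c-1}e^{-(t+\tau)}$ justifies Fubini, the $\theta$-average does collapse to $I_0\bigl(2cx\sqrt{t\tau}\bigr)$, and the AM--GM inequality $(t+\tau)-2\sqrt{t\tau}\cos\phi\geq(\sqrt t-\sqrt\tau)^2\geq 0$ exhibits $S_{n,c}$ as a nonnegative superposition of decaying exponentials $e^{-\lambda x}$, $\lambda\geq 0$; only the trivial direction of Bernstein's theorem is then invoked, via differentiation under the integral sign (dominated, since all moments of the representing measure are finite thanks to the $e^{-(t+\tau)}$ factor). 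Two minor points you should make explicit in a final write-up: the \emph{strict} inequalities $(-1)^mS_{n,c}^{(m)}(x)>0$ for $m\geq 1$ need the observation that the representing measure assigns positive mass to $\{\lambda>0\}$ (it obviously does), and the passage from midpoint log-convexity to log-convexity uses continuity of $S_{n,c}$. The separate treatment of $c=0$ via $S_{n,0}(x)=e^{-2nx}I_0(2nx)$ is correct. Compared with merely citing~\cite{1}, your route buys a short, elementary, and essentially computation-free verification whose only special-function input is the integral representation of $I_0$.
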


On the other hand, according to~\cite[Th. 4]{17}, $S_{n,c}$ is a solution to the differential equation
\begin{eqnarray}
x(1+cx)(1+2cx)y''(x)+\left ( 4(n+c)x(1+cx)+1 \right )y'(x) +\nonumber \\ +2n(1+2cx)y(x)=0. \label{eq:3.1}
\end{eqnarray}

Consequently, for $c\neq 0$ the function $S_{n,c}\left ( -\frac{x}{c}\right )$ is a solution to the Heun equation
\begin{equation*}
y''(x)+\left ( \frac{1}{x}+\frac{1}{x-1}+\frac{\frac{2n}{c}}{x-\frac{1}{2}} \right )y'(x)+\frac{\frac{2n}{c}x-\frac{n}{c}}{x(x-1)\left ( x-\frac{1}{2}\right )} y(x)= 0,
\end{equation*}
and $S_{n,0}$ is a solution of the confluent Heun equation:
\begin{equation*}
u''(x)+\left ( 4n+\frac{1}{x} \right )u'(x)+\frac{2nx-2n}{x(x-1)}u(x)=0.
\end{equation*}

For details, see~\cite{14}-\cite{17}.

\section{R\'{e}nyi entropy and Tsallis entropy}

\begin{thm}\label{th:4.1}
\begin{enumerate}[(i)]
\item{}For $c\geq 0$, $R_{n,c}$ is concave and increasing on $[0,+\infty )$.
\item{}$R_{n,c}'$, with $c\in \mathbb{R}$, is a solution to the Riccati equation
\begin{eqnarray*}
x(1+cx)(1+2cx)u'(x) = x(1+cx)(1+2cx)u^2(x) -\\- \left ( 4(n+c)x(1+cx)+1\right )u(x) +2n(1+2cx).
\end{eqnarray*}
\end{enumerate}
\end{thm}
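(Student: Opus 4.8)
The plan is to derive both parts from the structural results already established for $S_{n,c}$, since $R_{n,c}=-\log S_{n,c}$ is merely the negative logarithm of $S_{n,c}$.

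For part (i), I would invoke Theorem~\ref{th:3.4} directly. For $c\geq 0$ it asserts that $S_{n,c}$ is completely monotonic, hence in particular logarithmically convex. Because $R_{n,c}=-\log S_{n,c}$, the logarithmic convexity of $S_{n,c}$ is precisely the concavity of $R_{n,c}$. For monotonicity, complete monotonicity supplies $S_{n,c}(x)>0$ and $S_{n,c}'(x)<0$ for $x\geq 0$ (the cases $m=0$ and $m=1$ in the definition), so that
\begin{equation*}
R_{n,c}'(x)=-\frac{S_{n,c}'(x)}{S_{n,c}(x)}>0,
\end{equation*}
and therefore $R_{n,c}$ is increasing on $[0,+\infty)$. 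No additional computation is required.

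For part (ii), the natural move is the standard logarithmic-derivative substitution that converts the linear second-order equation~\eqref{eq:3.1} for $S_{n,c}$ into a Riccati equation for its logarithmic derivative. Writing $y:=S_{n,c}$ and $u:=R_{n,c}'=-y'/y$, I would first record the two identities
\begin{equation*}
\frac{y'}{y}=-u,\qquad \frac{y''}{y}=u^2-u',
\end{equation*}
the second obtained by differentiating $u=-y'/y$. Dividing~\eqref{eq:3.1} by $y$ (legitimate since $S_{n,c}>0$) and inserting these identities, every occurrence of $y$ cancels and, after collecting terms, one recovers exactly the stated Riccati equation. This step is valid for every $c\in\mathbb{R}$, because~\eqref{eq:3.1} holds for all $c$.

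The computation is entirely routine, so I do not anticipate a genuine obstacle; the only point demanding care is the sign bookkeeping when substituting $y''/y=u^2-u'$ into the leading coefficient, which is what produces the $+u^2$ term on the right-hand side and the $-u'$ term that, once moved across, becomes the left-hand side $x(1+cx)(1+2cx)u'$ of the Riccati equation. Finally, I would emphasize that the restriction $c\geq 0$ in part (i) is essential, since only there does Theorem~\ref{th:3.4} furnish logarithmic convexity; the mere convexity of $S_{n,c}$ for $c<0$ granted by Theorem~\ref{th:3.2} does not suffice to conclude concavity of $R_{n,c}$.
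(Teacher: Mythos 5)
Your proposal is correct and follows essentially the same route as the paper: part (i) is deduced directly from Theorem~\ref{th:3.4} (with your explicit use of the $m=0,1$ cases of complete monotonicity being a welcome elaboration of the paper's one-line citation), and part (ii) is the same logarithmic-derivative substitution $S_{n,c}=\exp(-R_{n,c})$ applied to~\eqref{eq:3.1}.
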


\begin{proof}
\begin{enumerate}[i)]
\item{}is a direct consequence of Theorem~\ref{th:3.4}.
\item{}We have $S_{n,c} = \exp (-R_{n,c})$ and~\eqref{eq:3.1} yields
\begin{eqnarray*}
x(1+cx)(1+2cx)\left ( (R_{n,c}')^2 - R_{n,c}'' \right )-\\-\left ( 4(n+c)x(1+cx)+1 \right ) R_{n,c}'+2n(1+2cx)=0.
\end{eqnarray*}

Setting $u=R_{n,c}'$, we conclude the proof.
\end{enumerate}
\end{proof}

\begin{rem}
As far as we know, Conjecture~\ref{conj:3.3} is still open for $c<0$, so that the concavity of $R_{n,c}$, $c<0$, remains to be investigated.
\end{rem}

\begin{thm}\label{th:4.2}
\begin{enumerate}[(i)]
\item{}$T_{n,c}$ is concave. For $c\geq 0$ it is increasing on $[0,+\infty )$. For $c<0$ it is increasing on $\left [0, -\frac{1}{2c} \right ]$ and decreasing on $\left [-\frac{1}{2c}, -\frac{1}{c} \right ]$.
\item{}$T_{n,c}$ is a solution to the equation
\begin{eqnarray*}
x(1+cx)(1+2cx)u''(x)+\left ( 4(n+c)x(1+cx) + 1 \right )u'(x)+\\ + 2n(1+2cx)u(x) = 2n(1+2cx).
\end{eqnarray*}
\end{enumerate}
\end{thm}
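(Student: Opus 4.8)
The plan is to derive both parts directly from the properties of $S_{n,c}$ established in Section 3, using only the defining relation $T_{n,c}=1-S_{n,c}$, which gives $T_{n,c}'=-S_{n,c}'$ and $T_{n,c}''=-S_{n,c}''$. Thus every assertion about $T_{n,c}$ translates into a known assertion about $S_{n,c}$ with the opposite sign, and no series manipulation is needed.

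For part (ii) I would simply substitute $S_{n,c}=1-T_{n,c}$, $S_{n,c}'=-T_{n,c}'$, $S_{n,c}''=-T_{n,c}''$ into the homogeneous equation \eqref{eq:3.1}. Multiplying the result by $-1$ gives
\[
x(1+cx)(1+2cx)T_{n,c}''+\left(4(n+c)x(1+cx)+1\right)T_{n,c}'+2n(1+2cx)(T_{n,c}-1)=0,
\]
and transferring the term $-2n(1+2cx)$ to the right-hand side yields exactly the claimed inhomogeneous equation. This step is routine algebra and carries no obstacle.

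For part (i), concavity of $T_{n,c}$ is equivalent to convexity of $S_{n,c}$, because $T_{n,c}''=-S_{n,c}''$. For $c\geq 0$ this is immediate from Theorem~\ref{th:3.4}, since complete monotonicity gives $S_{n,c}''>0$; for $c<0$ it is precisely Theorem~\ref{th:3.2}. The monotonicity for $c\geq 0$ likewise comes from Theorem~\ref{th:3.4}: the case $m=1$ of complete monotonicity gives $S_{n,c}'<0$, hence $T_{n,c}'=-S_{n,c}'>0$, so $T_{n,c}$ is increasing on $[0,+\infty)$.

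The only ingredient not read off verbatim from Section 3 is the monotonicity for $c<0$, and this is where I expect the (mild) work to be. Here I would use the symmetry of the distribution about $x=-\frac{1}{2c}$: for $c<0$ and $n=-cl$ the substitution $u=-cx\in[0,1]$ turns $p_{n,k}^{[c]}$ into the binomial probabilities $\binom{l}{k}u^{k}(1-u)^{l-k}$, which are invariant under $k\leftrightarrow l-k$ together with $u\leftrightarrow 1-u$. This yields $S_{n,c}\left(-\frac{1}{2c}-t\right)=S_{n,c}\left(-\frac{1}{2c}+t\right)$, exactly the symmetry already invoked for $H_{n,c}$ in the proof of Theorem~\ref{th:2.4}. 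A convex function symmetric about $-\frac{1}{2c}$ is decreasing on $\left[0,-\frac{1}{2c}\right]$ and increasing on $\left[-\frac{1}{2c},-\frac{1}{c}\right]$; passing to $T_{n,c}=1-S_{n,c}$ reverses both, giving the asserted increase on $\left[0,-\frac{1}{2c}\right]$ and decrease on $\left[-\frac{1}{2c},-\frac{1}{c}\right]$. Since this symmetry is already available from Theorem~\ref{th:2.4}, the whole theorem is in effect a corollary of the results on $S_{n,c}$, and I anticipate no real difficulty.
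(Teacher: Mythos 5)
Your proposal is correct and follows exactly the route the paper intends: part (ii) by substituting $S_{n,c}=1-T_{n,c}$ into equation \eqref{eq:3.1}, and part (i) by translating concavity/monotonicity of $T_{n,c}$ into convexity/monotonicity of $S_{n,c}$ via Theorems \ref{th:3.2} and \ref{th:3.4}. The paper's own proof is a one-line citation of these results; your write-up supplies the details it leaves implicit, in particular the symmetry of $S_{n,c}$ about $-\frac{1}{2c}$ for $c<0$, which is indeed the missing ingredient for the monotonicity claim there and is argued correctly.
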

\begin{proof}
\begin{enumerate}[(i)]
\item{}is a consequence of Theorems~\ref{th:3.2} and~\ref{th:3.4}, while
\item{}follows from~\eqref{eq:3.1}.
\end{enumerate}
\end{proof}

\section{Some inequalities}

\begin{enumerate}[a)]

\item{} The explicit expression of $S_{n,-1}$, $n\in \mathbb{N}$, is
\begin{equation*}
S_{n,-1}(x) = \sum_{k=0}^n \left ( {n \choose k}x^k(1-x)^{n-k} \right )^2, \quad x\in [0,1].
\end{equation*}

Consider also the function
\begin{equation*}
f_n(t):=\sum_{j=0}^{n-1} {n-1 \choose j}\left ( {n\choose j+1}t^{2j+1} - {n \choose j}t^{2j} \right ), \quad t\geq 1.
\end{equation*}

Since $S_{n,-1}(1-x) = S_{n,-1}(x)$, it follows that
\begin{equation*}
S_{n,-1}^{(2j+1)}\left ( \frac{1}{2} \right )=0, \quad j=0,1, \dots, n-1 .
\end{equation*}

In relation with Conjecture~\ref{conj:3.1}, it was also conjectured in~\cite{13} that
\begin{equation}
S_{n,-1}^{(2j)}\left ( \frac{1}{2} \right ) >0, \quad j=0,1, \dots, n,\label{eq:5.1}
\end{equation}

\begin{equation}
f_n^{(i)}(t) \geq 0, \quad t\geq 1; \quad i=0,1, \dots, 2n-1, \label{eq:5.2}
\end{equation}

\begin{equation}
\sum _{j=[i/2]}^{n-1} {n-1\choose j}\left ( {n \choose j+1} {2j+1 \choose i} - {n\choose j} {2j \choose i}\right )\geq 0, \quad i = 0, 1, \dots, 2n-1. \label{eq:5.3}
\end{equation}

We shall prove here these inequalities\footnote{For~\eqref{eq:5.2} with $i=0$, see also \\ \url{http://www.artofproblemsolving.com/community/c6h494060p2772738}.}.

It can be proved directly that
\begin{equation*}
S_{n,-1} \left ( \frac{1}{2}\right ) = \frac{1}{4^n}{2n \choose n}; \quad S_{n,-1}^{(2)}\left ( \frac{1}{2}\right ) = \frac{1}{4^{n-2}}{2n-2 \choose n-1};
\end{equation*}
\begin{equation*}
S_{n,-1}^{(4)}\left ( \frac{1}{2}\right ) = \frac{9}{4^{n-4}}{2n-4 \choose n-2}.
\end{equation*}

The following formula was obtained in~\cite{4}:
\begin{equation}
S_{n,-1}(x) = \sum_{k=0}^n 4^{k-n} {2n \choose n}{n \choose k}^2 {2n \choose 2k}^{-1}\left ( x-\frac{1}{2} \right )^{2k}.\label{eq:5.4}
\end{equation}

Using it we get
\begin{equation*}
S_{n,-1}^{(2j)}\left ( \frac{1}{2}\right ) = (2j)! 4^{j-n}{2n \choose n}{n \choose j}^2 {2n \choose 2j}^{-1}, \quad j=0,1,\dots ,n,
\end{equation*}
and so~\eqref{eq:5.1} is proved.

On the other hand, it is not difficult to prove that
\begin{equation}
f_n(t) = \frac{1}{2n}(t+1)^{2n-1}S_{n,-1}'\left ( \frac{t}{t+1}\right ), \quad t\geq 1. \label{eq:5.5}
\end{equation}

From~\eqref{eq:5.4} we obtain
\begin{equation}
S_{n,-1}'(x) = \sum _{k=1}^{n}2ka_{nk} \left ( x - \frac{1}{2}\right )^{2k-1} \label{eq:5.6}
\end{equation}
for certain $a_{nk}>0$. Now~\eqref{eq:5.5} and~\eqref{eq:5.6} imply
\begin{equation*}
f_n(t) = \sum _{k=1}^n c_{nk}(t-1)^{2k-1} (t+1)^{2n-2k}, \quad t\geq 1,
\end{equation*}
with suitable $c_{nk}>0$, and using Leibniz' formula we get~\eqref{eq:5.2}.

Finally, starting from the definition of $f_n(t)$, it is not difficult to infer that
\begin{equation*}
f_n^{(i)}(1)=i! \sum_{j=[i/2]}^{n-1} {n-1 \choose j} \left ( {n \choose j+1} {2j+1 \choose i} - {n \choose j}{2j \choose i} \right ).
\end{equation*}

Combined with~\eqref{eq:5.2}, this proves~\eqref{eq:5.3}.

\item{} Let $B_m : \mathscr{C}[0,1]\to \mathscr{C}[0,1]$, $B_m f(x) = \sum _{k=0}^m {m \choose k} x^k (1-x)^{m-k}$, be the classical Bernstein operators. It is well-known that if $f \in \mathscr{C}[0,1]$ is convex, then $B_m f$ is also convex.

Now let $w_n \in \mathscr{C}[0,1]$ be piece-wise linear, such that $w_n \left ( \frac{2k-1}{2n}\right ) = 0$, $k=1,\dots, n$; $w_n \left ( \frac{2k}{2n} \right ) = {n \choose k}^2 / {2n \choose 2k}$, $k = 0, 1, \dots , n$. Then $B_{2n}w_n = S_{n,-1}$, hence $B_{2n}w_n$ is convex although the graph of $w_n$ is "like a saw".

\end{enumerate}

{\small
}

\end{document}